\newtheorem{thm}{Theorem}
\newtheorem{lemma}{Lemma}
\theoremstyle{definition}
\newtheorem{defin}{Definition}
\def\ind{\mathbh{1}} 
\begin{document}
\begin{frontmatter}

\title{A L{u}ndberg-type inequality for an~inhomogeneous renewal risk model}

\author{\inits{I.M.}\fnm{Ieva Marija}\snm{Andrulyt\.{e}}}\email
{i.m.andrulyte@gmail.com}
\author{\inits{E.}\fnm{Emilija}\snm{Bernackait\.{e}}}\email
{emilija.bernackaite@mif.vu.lt}
\author{\inits{D.}\fnm{Dominyka}\snm{Kievinait\.{e}}}\email
{d.kievinaite@gmail.com}
\author{\inits{J.}\fnm{Jonas}\snm{\v{S}iaulys}\corref{cor1}}\email
{jonas.siaulys@mif.vu.lt}
\address{Faculty of Mathematics and Informatics, Vilnius University,
Naugarduko~24, Vilnius~LT-03225, Lithuania}
\cortext[cor1]{Corresponding author.}

\markboth{I.M. Andrulyt\.{e} et al.}{A L{u}ndberg-type inequality for an~inhomogeneous renewal risk model}

\begin{abstract}
We obtain a Lundberg-type inequality in the case of an inhomogeneous
renewal risk model. We consider the model with independent, but not
necessarily identically distributed, claim sizes and the
interoccurrence times. In order to prove the main theorem, we first
formulate and prove an auxiliary lemma on large values of a sum of
random variables asymptotically drifted in the negative direction.
\end{abstract}

\begin{keyword}
Inhomogeneous model \sep renewal model \sep Lundberg-type inequality
\sep exponential bound \sep ruin probability
\MSC[2010] 91B30 \sep 60G50
\end{keyword}

\received{19 June 2015}
\revised{25 July 2015}
\accepted{26 July 2015}
\publishedonline{31 July 2015}
\end{frontmatter}

\section{Introduction}\label{i}

The classical risk model and the renewal risk model are two models that
are traditionally used to describe the nonlife insurance business. The
classical risk model was introduced by Lundberg and Cram\'{e}r about a
century ago (see \cite{cramer,Lundberg1,Lundberg2} for
the source papers and \cite{rolski} for the historical environment). In
this risk model, it is assumed that interarrival times are identically
distributed, exponential, and independent random variables. In 1957,
the Danish mathematician E. Sparre Andersen proposed the renewal risk
model to describe the surplus process of the insurance company. In the
renewal risk model, the claim sizes and the interarrival times are
independent, identically distributed, nonnegative random variables (see
\cite{andersen} for the source paper and \cite{thorin} for additional
details). In this paper, we assume that interoccurrence times and claim
sizes are nonnegative random variables (r.v.s) that are not necessarily
identically distributed. We call such a model the inhomogeneous model
and present its exact definition. It is evident that the inhomogeneous
renewal risk model reflects better the real insurance activities in
comparison with the classical risk model or with the renewal
(homogeneous) risk model.

\begin{defin}\label{dd}
We say that the insurer's surplus $U(t)$ varies according to
the inhomogeneous renewal risk model if
\vspace{-2mm}
\begin{align*}
U(t)=U(\omega,t)=x+ct-\sum
_{i=1}^{\varTheta(t)}Z_i
\vspace{-2mm}
\end{align*}
for all $t\geqslant0$. Here:
\begin{itemize}
\item$x\geqslant0$ is the initial reserve;
\item claim sizes $\{ Z_1,Z_2,\dots\}$ form a sequence of independent
(not ne\-ces\-sarily identically distributed) nonnegative r.v.s;
\item$c>0$ is the constant premium rate;
\item$\varTheta(t)=\sum_{n=1}^{\infty}\ind_{\lbrace T_n\leqslant
t\rbrace}=\sup\lbrace n\geqslant0:T_n\leqslant t\rbrace$ is the
number of claims in the interval $[0,t]$,
where $T_0=0$, $T_n=\theta_1+\theta_2+\cdots+\theta_n$, $n\geqslant1$,
and the interarrival times $\{\theta_1,\theta_2, \ldots\}$ are
independent (not ne\-ces\-sarily identically distributed), nonnegative, and nondegenerate at zero r.v.s;
\item the sequences $\lbrace Z_1,Z_2,\ldots\rbrace$ and $\lbrace\theta
_1,\theta_2,\ldots\rbrace$ are mutually independent.
\end{itemize}
\end{defin}

A typical path of the surplus process of an insurance company is shown
in Fig. \ref{f01}.

\begin{figure}[t!]
\includegraphics{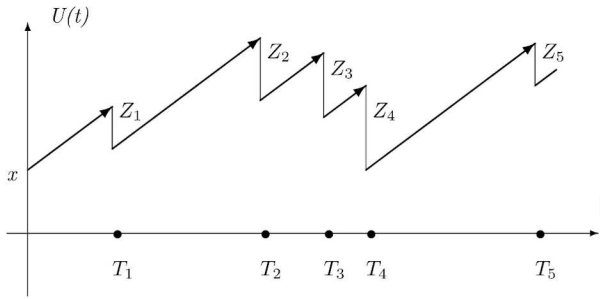}
\caption{Behavior of the surplus process.}\label{f01}
\end{figure}

If all claim sizes $\{Z_1,Z_2, \ldots\}$ and all interarrival times
$\{\theta_1,\theta_2,\ldots\}$ are identically distributed, then the
inhomogeneous renewal risk model becomes the homogeneous renewal risk model.

The \textit{time of ruin} and \textit{the ruin probability} are the
main critical characteristics of any risk model.
Let $\mathcal{B}$ denote the event of ruin. We suppose that
\begin{align}
\notag\mathcal{B}=\bigcup_{t\geqslant0}\bigl\lbrace\omega:U(
\omega ,t)<0\bigr\rbrace=\bigcup_{t\geqslant0} \Biggl\{
\omega:x+ct-\sum
_{i=1}^{\varTheta(t)}Z_i<0 \Biggr
\}\notag,
\end{align}
that is, that ruin occurs if at some time $t\geqslant0$ the surplus of
the insurance company becomes negative or, in other words, the insurer
becomes unable to pay all the claims.
The first time $\tau$ when the surplus drops to a level less than zero
is called \textit{the time of ruin}, that is, $\tau$ is the extended
r.v.\ for which
\begin{align*}
\notag\tau=\tau(\omega)= %
\begin{cases} \inf\lbrace t\geqslant0:U(\omega,t)<0\rbrace&\mbox{if }
\omega\in\mathcal{B},\\ \infty&\mbox{if } \omega\notin\mathcal{B}.
\end{cases} %
\notag
\end{align*}

\textit{The ruin probability} $\psi$ is defined by the equality
\begin{align}
\notag\psi(x)=\mathbb{P}(\mathcal{B})=\mathbb{P}(\tau=\infty).
\end{align}
Usually, we suppose that the main parameter of the ruin probability is
the initial reserve $x$, though actually the ruin probability, together
with time of ruin, depends on all components of the renewal risk model.

All trajectories of the process $U(t)$ are increasing functions
between times $T_n$ and $T_{n+1}$ for all $n=0,1,2,\ldots$\,. Therefore,
the random variables $U(\theta_1+\theta_2+\cdots+\theta_n)$, $n\geqslant
1$, are the local minimums of the trajectories. Consequently, we can
express the ruin probability in the following way (for details, see
\cite{embrechts} or \cite{mikosch}):
\begin{align*}
\psi(x)&=\mathbb{P} \Bigl(\inf_{n\in\mathbb{N}}U(\theta_1+\theta _2+\cdots+\theta_n)<0 \Bigr)\\
&=\mathbb{P} \Biggl(\inf_{n\in\mathbb{N}} \Biggl\lbrace x + c(\theta_1+\theta_2+\cdots+\theta_n)
-
\sum_{i=1}^{\varTheta(\theta_1+\cdots+\theta_n)}\!Z_i\Biggr\rbrace<0 \Biggr)
\\
&=\mathbb{P} \Biggl(\inf_{n\in\mathbb{N}}
\Biggl \lbrace x - \sum_{i=1}^{n} (Z_i-c\theta_{i})
\Biggr\rbrace <0 \Biggr)
\\
& = \mathbb{P} \Biggl(\sup_{n\in\mathbb{N}}
\Biggl\lbrace\sum_{i=1}^n (Z_i-c\theta_{i})
\Biggr\rbrace>x \Biggr).
\end{align*}

Further, in this paper, we restrict our study to the so-called \textit
{Lundberg-type inequa\-lity}. An exponential bound for the ruin
probability is usually called a~ Lundberg-type inequality. We further
give the well-known exponential bound for $\psi(x)$ in homogeneous
renewal risk model (see, for instance, Chapters ``Lundberg Inequality
for Ruin Probability\xch{'',}{,''} ``Collective Risk Theory\xch{'', }{,''}\xch{``}{``"}Adjustment
Coefficient,'' or ``Cramer--Lundberg Asymptotics'' in \cite{enciklopedija}).

\begin{thm}
Let the net profit condition
$\mathbb{E}Z_1-c\mathbb{E}\theta_1<0$ hold, and let
\hbox{$\mathbb{E}{\rm e}^{hZ_1}<\infty$} for some
$h>0$ in the homogeneous renewal risk model. Then, there is a number
$H>0$ such that
\begin{equation}
\label{aa} \psi(x)\leqslant{\rm e}^{-Hx}
\end{equation}
for all $x\geqslant0$.
If $\mathbb{E}{\rm e}^{R(Z_1-c\theta_1)}=1$
for some positive $R$, then we can take $H=R$ in
\eqref{aa}.\looseness=-1
\end{thm}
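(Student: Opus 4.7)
The plan is to translate the ruin probability into a tail estimate for a random walk with negative drift and then exploit an exponential (super)martingale built from this walk. From the representation already derived in the introduction, $\psi(x) = \mathbb{P}(\sup_{n \geq 1} S_n > x)$, where $S_n = \sum_{i=1}^n X_i$ and $X_i = Z_i - c\theta_i$. Under the homogeneity hypothesis the $X_i$ are i.i.d., which is the setting required by the classical i.i.d.\ random-walk arguments.

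Next I would analyze the moment generating function $g(r) = \mathbb{E}e^{r X_1} = \mathbb{E}e^{r Z_1}\,\mathbb{E}e^{-c r \theta_1}$, using the independence of $Z_1$ and $\theta_1$. For $r \in [0,h]$ the factor $\mathbb{E}e^{-c r\theta_1}$ is bounded by $1$ (since $\theta_1 \geq 0$) and $\mathbb{E}e^{rZ_1}$ is finite by hypothesis, so $g$ is finite and smooth on $[0,h]$. Differentiation under the expectation gives $g(0) = 1$ and $g'(0) = \mathbb{E}Z_1 - c\mathbb{E}\theta_1 < 0$ by the net profit condition, so continuity produces some $H \in (0, h)$ with $g(H) \leq 1$.

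With such an $H$ fixed, the process $M_n = e^{H S_n}$ is a nonnegative supermartingale with respect to the natural filtration $\mathcal{F}_n = \sigma(X_1, \ldots, X_n)$, because $\mathbb{E}[M_{n+1} \mid \mathcal{F}_n] = g(H)\, M_n \leq M_n$. Introducing the stopping time $\tau_x = \inf\{n \geq 1 : S_n > x\}$ and applying optional stopping at the bounded time $\tau_x \wedge N$ yields
\begin{align*}
1 = \mathbb{E}M_0 \geq \mathbb{E}M_{\tau_x \wedge N} \geq \mathbb{E}\bigl[e^{H S_{\tau_x}} \mathbf{1}_{\{\tau_x \leq N\}}\bigr] \geq e^{Hx}\, \mathbb{P}(\tau_x \leq N).
\end{align*}
Letting $N \to \infty$ and using $\{\tau_x \leq N\} \uparrow \{\sup_n S_n > x\}$ gives $\psi(x) \leq e^{-Hx}$.

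Under the stronger Lundberg hypothesis $\mathbb{E}e^{R(Z_1 - c\theta_1)} = 1$, the process $e^{R S_n}$ is an honest martingale, and the same optional-stopping argument with $H$ replaced by $R$ produces the sharper bound $\psi(x) \leq e^{-Rx}$. The only real subtlety is the optional-stopping step, which cannot be applied at $\tau_x$ itself since that time may be infinite; the truncation at $N$ together with the monotone passage to the limit resolves this cleanly, and I do not foresee further obstacles.
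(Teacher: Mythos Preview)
Your argument is correct and is essentially the classical exponential-martingale proof going back to Gerber: find $H>0$ with $\mathbb{E}e^{H(Z_1-c\theta_1)}\leqslant1$ via $g(0)=1$, $g'(0)<0$, then apply optional stopping to the supermartingale $e^{HS_n}$ at $\tau_x\wedge N$ and pass to the limit. The only cosmetic point is that ``smooth on $[0,h]$'' is slightly stronger than you need or can easily justify; right-differentiability of $g$ at $0$ (which follows by dominated convergence once $\mathbb{E}\theta_1<\infty$, implicit in the net profit condition) already gives an $H$ with $g(H)<1$.

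There is, however, nothing to compare against: the paper does \emph{not} prove this theorem. It states it as background, remarks that ``there exist a lot of different proofs,'' and points to the literature --- among them Gerber's martingale method (which is exactly what you carried out) and Sgibnev's exponential tail bound for random walks with negative drift combined with $\psi(0)<1$. The paper's own effort goes entirely into the inhomogeneous Theorem~\ref{t} and its supporting Lemma~\ref{lll}, which use a direct Chernoff-type estimate on each $\mathbb{P}(\sum_{i=1}^k\eta_i>x)$ rather than a stopping-time argument.
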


There exist a lot of different proofs of this theorem. The main ways
to prove inequality~\eqref{aa} are described in Chapter ``Lundberg
Inequality for Ruin Probability'' of \cite{enciklopedija}. Details of
some existing proofs were given, for instance, by Asmussen and
Albrecher \cite{asmussen}, Embrechts, Kl\"{u}ppelberg, and Mikosch \cite
{embrechts}, Embrechts and Veraverbeke~\cite{embrechts1}, Gerber \cite
{gerber}, and Mikosch \cite{mikosch}. We note only that the bound \eqref
{aa} can be proved using the exponential tail bound of Sgibnev~\cite
{sgibnev} and the inequality $\psi(0)<1$.

The following theorem is the main statement of the paper.

\begin{thm}\label{t}
Let the claim sizes $\{ Z_1,Z_2,\dots\}$ and the interarrival
times $\{\theta_1,\theta_2,\dots\}$ form an inhomogeneous renewal risk
model described in Definition \ref{dd}. Further, let the following
three conditions be satisfied:
\begin{align*}
(\mathcal{C}1)\hspace{0.1cm}
&
\sup_{i\in\mathbb{{N}}}
\mathbb{E} {\rm e}^{\gamma Z_{i}}<\infty\quad\mbox{for some}\ \gamma>0,
\\
(\mathcal{C}2)\hspace{0.1cm}
&
\lim_{u\to\infty}\sup_{i\in\mathbb{N}}
\mathbb{E}\big(\theta_{i}\ind_{\lbrace\theta_{i}>u\rbrace}\big)=0,
\\
(\mathcal{C}3)\hspace{0.1cm}
&
\limsup_{n\to\infty} \frac{1}{n}
\sum _{i=1}^{n}(\,\mathbb{E}Z_i-c\mathbb{E}\theta_{i}) <0.
\end{align*}
Then, there are constants $c_1>0$ and $c_2\geqslant0$ such that
$\psi(x)\leqslant{\rm e}^{-c_{1}x}$ for all $ x\geqslant c_2$.
\end{thm}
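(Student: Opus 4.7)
I would begin from the identity
\[
\psi(x) = \mathbb{P}\Bigl(\sup_{n\geqslant 1} S_n > x\Bigr),\qquad S_n := \sum_{i=1}^n(Z_i - c\theta_i),
\]
already derived in the introduction, which recasts the problem as a one-sided tail bound for the supremum of an inhomogeneous random walk whose independent increments $X_i := Z_i - c\theta_i$ drift negatively on average by $(\mathcal{C}3)$. The natural strategy is the classical Lundberg exponential-tilt argument, combined with a union bound over $n$: once one has a uniform exponential-moment estimate
\[
\mathbb{E}{\rm e}^{hX_i} \leqslant \exp\bigl\{h(\mathbb{E}Z_i - c\mathbb{E}\theta_i) + h\varepsilon\bigr\}\qquad(i\geqslant 1)
\]
for suitable $h,\varepsilon>0$, the auxiliary lemma announced in the abstract (a generic tail bound for sums of independent random variables with asymptotically negative drift) can be invoked to convert it into an exponential bound for $\mathbb{P}(\sup_n S_n > x)$.

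The technical heart is therefore this uniform moment estimate. Since the sequences $\{Z_i\}$ and $\{\theta_i\}$ are independent, $\mathbb{E}{\rm e}^{hX_i}$ factorises. For the claim part, condition $(\mathcal{C}1)$ together with the Taylor expansion ${\rm e}^{hZ_i} = 1 + hZ_i + \tfrac{1}{2}h^2 Z_i^2 {\rm e}^{h\xi_i}$ and the crude domination $z^2 \leqslant C_\gamma {\rm e}^{\gamma z/2}$ yields $\mathbb{E}{\rm e}^{hZ_i} \leqslant 1 + h\mathbb{E}Z_i + M_1 h^2$ uniformly in $i$ for $h\in(0,h_0]$. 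The interarrival factor $\mathbb{E}{\rm e}^{-hc\theta_i}$ is more delicate because $(\mathcal{C}2)$ only supplies uniform integrability of $\theta_i$, not exponential moments. Exploiting $\theta_i\geqslant 0$, I would use the elementary inequality ${\rm e}^{-y}\leqslant 1-y+y^2/2$ (valid for $y\geqslant 0$) on the event $\{\theta_i\leqslant u\}$ together with $\theta_i^2 \leqslant u\theta_i$, and the trivial ${\rm e}^{-hc\theta_i}\leqslant 1$ on $\{\theta_i > u\}$; adding these gives
\[
\mathbb{E}{\rm e}^{-hc\theta_i} \leqslant 1 - hc\mathbb{E}\theta_i + hc\,\mathbb{E}\bigl(\theta_i\ind_{\{\theta_i>u\}}\bigr) + \tfrac{1}{2}(hc)^2 u\,\mathbb{E}\theta_i.
\]
Choosing $u$ large by $(\mathcal{C}2)$ and then $h$ small relative to $u$ forces both correction terms to be uniformly of order $h\varepsilon$. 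Combining the two factors and applying $1+x\leqslant {\rm e}^x$ produces the desired bound.

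With the uniform exponential-moment bound established, hypothesis $(\mathcal{C}3)$, with $\varepsilon$ chosen smaller than half the absolute value of the $\limsup$ appearing there, provides a constant $\delta>0$ such that $\sum_{i=1}^n(\mathbb{E}Z_i - c\mathbb{E}\theta_i + \varepsilon)\leqslant -\delta n$ for all sufficiently large $n$. Hence $\prod_{i=1}^n \mathbb{E}{\rm e}^{hX_i} \leqslant K{\rm e}^{-h\delta n}$; Markov's inequality yields $\mathbb{P}(S_n > x) \leqslant K{\rm e}^{-hx - h\delta n}$; and summing over $n$ via a union bound gives $\psi(x) \leqslant K_1{\rm e}^{-hx}$ for a constant $K_1$. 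Taking $c_1$ slightly smaller than $h$ and $c_2 := \max\{0, (h-c_1)^{-1}\log K_1\}$ absorbs the prefactor into the exponent and yields $\psi(x)\leqslant {\rm e}^{-c_1 x}$ for all $x\geqslant c_2$.

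\textbf{Main obstacle.} The principal difficulty lies in the second step: $(\mathcal{C}2)$ is strictly weaker than uniform boundedness of second moments, so the $h^2\mathbb{E}\theta_i^2$ term that would appear in a naive Taylor expansion of $\mathbb{E}{\rm e}^{-hc\theta_i}$ need not even be defined, let alone uniformly bounded. The truncation trick $\theta_i^2\ind_{\{\theta_i\leqslant u\}}\leqslant u\theta_i$ circumvents this, but it forces one to choose the truncation level $u$ and the tilt parameter $h$ in the correct order ($u$ first from $(\mathcal{C}2)$, then $h$ depending on $u$), so that the resulting quadratic error is of strictly lower order than the negative linear drift provided by $(\mathcal{C}3)$.
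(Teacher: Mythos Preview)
Your proposal is correct and follows the same high-level Chernoff strategy as the paper (Markov's inequality on $e^{hS_n}$, a uniform control of the moment-generating functions via truncation, then a union bound over $n$), but the technical implementation differs in a way worth noting. The paper first isolates an abstract lemma for independent $\eta_i$ satisfying three conditions---uniform exponential moment, uniform integrability of the negative part, and asymptotically negative Ces\`aro mean---and expands $\mathbb{E}e^{y\eta_i}$ directly by splitting on the value of $\eta_i$ (regions $\eta_i\leqslant -u$, $-u<\eta_i\leqslant 0$, $\eta_i>0$), tying the truncation to the tilt via $u=y^{-1/4}$; the theorem is then reduced to verifying those abstract conditions for $\eta_i=Z_i-c\theta_i$, and in particular the uniform-integrability condition on the negative tail of $\eta_i$ has to be deduced from $(\mathcal{C}1)$ and $(\mathcal{C}2)$ through a separate chain of estimates. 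You instead exploit the independence of $Z_i$ and $\theta_i$ to factor $\mathbb{E}e^{hX_i}=\mathbb{E}e^{hZ_i}\cdot\mathbb{E}e^{-hc\theta_i}$ and expand each factor separately, so that $(\mathcal{C}1)$ and $(\mathcal{C}2)$ are consumed directly rather than through a derived condition on $X_i$. Your route is somewhat more economical in this specific model; the paper's route buys a reusable lemma that does not rely on the product structure and would still apply if $Z_i$ and $\theta_i$ were dependent within each pair. One small point to make explicit: your term $\tfrac{1}{2}(hc)^2 u\,\mathbb{E}\theta_i$ requires $\sup_i\mathbb{E}\theta_i<\infty$, which does follow from $(\mathcal{C}2)$ but should be stated.
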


The inhomogeneous renewal risk model differs from the homogeneous one
because the independence and/or homogeneous distribution of sequences
of random variables $\lbrace Z_1,Z_2,\dots\rbrace$ and/or $\lbrace\theta
_1,\theta_2,\dots\rbrace$ are no longer required. The changes depend on
how the inhomogeneity in a particular model is understood. In
Definition \ref{dd}, we have chosen one of two possible directions used
in numerous articles that deal with inhomogeneous renewal risk models.
This is due to the fact that an inhomogeneity can be considered as the
possibility to have either differently distributed or dependent r.v.s
in the sequences.

The possibility to have differently distributed r.v.s was considered,
for instance, in \cite{ej,kej,lefevre,raducan}. In the first three works, the discrete-time inhomogeneous
risk model was considered. In such a model, the interarrival times are
fixed, and the claims $\lbrace Z_1,Z_2,\dots\rbrace$ are independent,
not necessary identically distributed, integer valued r.v.s. In \cite
{raducan}, the authors consider the model where the interarrival times
are identically distributed and have a particular distribution, whereas
the claims are differently distributed with distributions belonging to
a particular class. Bernackait\.{e} and \v{S}iau\-lys \cite{ej2} deal with an inhomogeneous renewal risk model where the r.v.s
$\lbrace\theta_1,\theta_2,\dots\rbrace$ are not necessarily identically
distributed, but the claim sizes $\lbrace Z_1,Z_2,\dots\rbrace$ have a
common distribution function. In this article, we consider a more
general renewal risk model. In the main theorem, we assume that not
only r.v.s $\lbrace\theta_1,\theta_2,\dots\rbrace$ are not necessarily
identically distributed, but also the same holds for the sequence of
claim sizes $\lbrace Z_1,Z_2,\dots\rbrace$.

There is another approach to inhomogeneous renewal risk models, which
implies the possibility to have dependence in sequences and mainly
found in works by Chinese researchers. In this kind of models, the
sequences $\lbrace Z_1,Z_2,\dots\rbrace$ and $\lbrace\theta_1,\theta
_2,\dots\rbrace$ consist of identically distributed r.v.s, but there
may be some kind of dependence between them. Results for such models
can be found, for instance, in \cite{chen} and \cite{wang}. Another
interpretation of dependence is also possible, where r.v.s in both
sequences $\lbrace Z_1,Z_2,\dots\rbrace$ and $\lbrace\theta_1,\theta
_2,\dots\rbrace$ still remain independent. Instead of that, the mutual
independence of these two sequences is no longer required. The idea of
this kind of dependence belongs to Albrecher and Teugels \cite
{albrecher}, and this encouraged Li, Tang, and Wu \cite{li} to study
renewal risk models having this dependence structure.

The rest of the paper consists of two sections. In Section \ref{au}, we
formulate and prove an auxiliary lemma. The proof of the main theorem
is presented in Section \ref{pa}.

\section{Auxiliary lemma}\label{au}

In order to prove the main theorem, we need an auxiliary lemma. In
Lemma~\ref{lll}, the conditions for r.v.s $\eta_{1}$, $\eta_{2}$, $\eta
_{3},\ldots$ are taken from articles by Smith \cite{smith} and
Bernac\-kait\. e and \v Siaulys \cite{ej2}.

\begin{lemma}\label{lll}
Let $\eta_{1}$, $\eta_{2}$, $\eta_{3},\ldots$ be independent
r.v.s such that
\begin{align*}
\bigl(\mathcal{C}1^*\bigr)\hspace{0.1cm}&
\sup_{i\in\mathbb{N}}\mathbb{E} {\rm e}^{\delta\eta_{i}}<\infty\quad \mbox{for some}
\ \delta >0,
\\
\bigl(\mathcal{C}2^*\bigr)\hspace{0.1cm}&
\lim_{u\to\infty}
\sup_{i\in\mathbb{N}}
\mathbb{E}\bigl(|\eta_{i}|\ind_{\lbrace\eta_{i}\leqslant-u\rbrace}\bigr)=0,
\\
\bigl(\mathcal{C}3^*\bigr)\hspace{0.1cm}&
\limsup_{n\to\infty}\frac{1}{n}
\sum_{i=1}^{n} \mathbb{E}\eta_{i} < 0.
\end{align*}
Then, there are constants $c_{3}>0$ and $c_{4}>0$ such that
\begin{align*}
\mathbb{P} \Biggl(\sup_{k\geqslant1}\sum
_{i=1}^k \eta _{i}>x \Biggr)\leqslant
c_{3}{\rm e}^{-c_{4}x}
\end{align*}
for all $x\geqslant0$.
\end{lemma}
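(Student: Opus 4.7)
The plan is a Chernoff--Cram\'er bound combined with a union bound, after a preliminary truncation that makes $(\mathcal{C}2^*)$ usable. By $(\mathcal{C}3^*)$ fix $\alpha>0$ such that $\limsup_n \frac{1}{n}\sum_{i=1}^n \mathbb{E}\eta_i < -3\alpha$, and by $(\mathcal{C}2^*)$ fix $u>0$ with $\sup_i \mathbb{E}(|\eta_i|\ind_{\{\eta_i\leqslant -u\}})\leqslant \alpha$. Set $\widetilde\eta_i:=\eta_i\vee(-u)$; since $\widetilde\eta_i\geqslant \eta_i$ pointwise, it suffices to bound $\mathbb{P}(\sup_k\sum_{i=1}^k\widetilde\eta_i>x)$. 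A direct estimate gives $\mathbb{E}\widetilde\eta_i-\mathbb{E}\eta_i = \mathbb{E}((-u-\eta_i)\ind_{\{\eta_i<-u\}}) \leqslant \alpha$ uniformly in $i$, so $\sum_{i=1}^n \mathbb{E}\widetilde\eta_i\leqslant -2\alpha n$ for all $n\geqslant N_0$.

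Next I obtain a uniform quadratic upper bound on the moment generating function of $\widetilde\eta_i$. Starting from the pointwise inequality $e^y-1-y\leqslant \frac{y^2}{2}(\ind_{\{y<0\}}+e^{y}\ind_{\{y\geqslant 0\}})$, the restriction $\widetilde\eta_i\geqslant -u$ controls the negative part by the deterministic quantity $u^2$, while the positive part is handled by $(\mathcal{C}1^*)$ together with the elementary bound $x^2 e^{hx}\leqslant \frac{2}{(\delta-h)^2}e^{\delta x}$ for $x\geqslant 0$ and $0<h<\delta$. Combining these yields a constant $K=K(u,\delta,\sup_i\mathbb{E}e^{\delta\eta_i})$ such that, for every $h\in(0,\delta/2]$ and every $i$,
\begin{equation*}
\mathbb{E} e^{h\widetilde\eta_i}\leqslant 1+h\,\mathbb{E}\widetilde\eta_i+Kh^2.
\end{equation*}
Using $\log(1+y)\leqslant y$, summing in $i$ from $1$ to $n$, and applying the bound on $\sum\mathbb{E}\widetilde\eta_i$ from the first paragraph, I get for $n\geqslant N_0$
\begin{equation*}
\sum_{i=1}^n\log\mathbb{E}e^{h\widetilde\eta_i}\leqslant h\sum_{i=1}^n\mathbb{E}\widetilde\eta_i+Kh^2 n\leqslant nh(Kh-2\alpha).
\end{equation*}

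Finally, I choose $h:=\min\{\alpha/K,\,\delta/2\}$, which makes the right-hand side at most $-\alpha h\cdot n$, so $\prod_{i=1}^n\mathbb{E}e^{h\widetilde\eta_i}\leqslant e^{-\alpha h n}$ for $n\geqslant N_0$. Chernoff's inequality gives $\mathbb{P}(\sum_{i=1}^k\widetilde\eta_i>x)\leqslant e^{-hx}\prod_{i=1}^k\mathbb{E}e^{h\widetilde\eta_i}$, and a union bound in $k$ finishes the argument: the first $N_0-1$ terms are bounded by $e^{-hx}C_0^{N_0}$ with $C_0:=\sup_i\mathbb{E}e^{h\widetilde\eta_i}<\infty$ (another consequence of the quadratic MGF bound), and the tail is a convergent geometric sum. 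We obtain $\mathbb{P}(\sup_{k\geqslant 1}\sum_{i=1}^k\eta_i>x)\leqslant c_3 e^{-c_4 x}$ with $c_4=h$.

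The main obstacle is the uniform quadratic MGF estimate in the second step. The positive tail of $\widetilde\eta_i$ is controlled uniformly only through $(\mathcal{C}1^*)$, while the negative tail, being only $L^1$-uniformly integrable by $(\mathcal{C}2^*)$, must be tamed beforehand by truncation at level $-u$; the order of the choices (first $\alpha$, then $u$, then $h$) is essential because $K$ depends on $u$.
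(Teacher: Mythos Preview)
Your argument is correct and follows the same overall strategy as the paper: a Chernoff bound on each partial sum, the inequality $\log(1+y)\leqslant y$ to pass from the MGF bound to an exponential bound in $k$, then a union bound with the first finitely many terms handled separately. The one substantive technical difference is in how the negative tail is tamed. You truncate each $\eta_i$ from below at a \emph{fixed} level $-u$ chosen via $(\mathcal{C}2^*)$ before selecting the exponent $h$; this makes the quadratic remainder $Kh^2$ immediate (with $K$ depending on $u$), and your careful ordering $\alpha\to u\to h$ is exactly what is needed for the drift to survive. The paper instead keeps the untruncated variables and lets the threshold depend on the exponent, setting $u=y^{-1/4}$, which yields a remainder $y\alpha(y)$ with $\alpha(y)\downarrow 0$ and then chooses $y^*$ small. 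Your pre-truncation is slightly cleaner and more modular; the paper's $y$-dependent threshold avoids introducing the auxiliary variables $\widetilde\eta_i$ but requires tracking the extra function $\alpha(y)$. Either way the conclusion and the constants obtained are of the same nature.
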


\begin{proof}

First, we observe that, for all $x\geqslant0$,
\begin{align}
\mathbb{P}
\Biggl(\sup_{k\geqslant1}\sum_{i=1}^k \eta _{i}>x \Biggr)
&{}=\mathbb{P}
\Biggl(\bigcup_{k=1}^\infty \Biggl\lbrace \sum_{i=1}^k \eta_{i}>x \Biggr\rbrace \Biggr)\nonumber\\
\label{r0} &{}\leqslant \sum
_{k=1}^\infty
\mathbb{P} \Biggl(\sum
_{i=1}^k
\eta_{i}>x \Biggr).
\end{align}

By Chebyshev's inequality, for all $x\geqslant0$, $ 0<y\leqslant\delta
$, and $k\in\mathbb{N}$, we have
\begin{align}
\mathbb{P}
\Biggl(\sum_{i=1}^k\eta_{i}>x \Biggr)
&{}=\mathbb{P}
\bigl({\rm e}^{y\sum_{i=1}^k \eta_{i}}>{\rm e}^{yx} \bigr)\nonumber\\
&
{}\leqslant
{\rm e}^{-yx}\prod _{i=1}^{k} \mathbb{E} {\rm e}^{y\eta_{i}}.
\label{r1}
\end{align}

Moreover, for all $i\in\mathbb{N}$, $ 0<y\leqslant\delta$, and $u>0$,
we have
\begin{align}
&\label{r2} \mathbb{E} {\rm e}^{y\eta_{i}}=1+y\mathbb{E}
\eta_{i}+\mathbb{E}\bigl({\rm e}^{y\eta_{i}}-1-y\eta_{i}
\bigr)
\end{align}
and\vspace*{-2pt}
\begin{align*}
&\mathbb{E}\bigl({\rm e}^{y\eta_{i}}-1-y\eta_{i}\bigr)
\\[1pt]
&\quad{}
=
\mathbb{E}\bigl(\bigl({\rm e}^{y\eta_{i}}-1\bigr)\ind_{\lbrace\eta_{i}\leqslant -u\rbrace}\bigr)
-
y\mathbb{E}(\eta_{i}\ind_{\lbrace\eta_{i}\leqslant-u\rbrace
})
\\[1pt]
&\qquad{}
+
\mathbb{E}\bigl(\bigl({\rm e}^{y\eta_{i}}-1-y\eta_{i}\bigr)
\ind_{\lbrace-u<\eta _{i}\leqslant0\rbrace}\bigr)
+
\mathbb{E}\bigl(\bigl({\rm e}^{y\eta_{i}}-1-y
\eta_{i}\bigr)\ind _{\lbrace\eta_{i}>0\rbrace}\bigr).
\end{align*}\goodbreak

In order to evaluate the absolute value of the remainder term in \eqref
{r2}, we need the following inequalities:\vspace*{-2pt}
\begin{align*}
\bigl|{\rm e}^{v}-1\bigr|\leqslant|v|,&{}\quad v\leqslant0,
\\[1pt]
\bigl|{\rm e}^{v}-v-1\bigr|\leqslant\frac{v^2}{2},&{}\quad v\leqslant0,
\\[1pt]
\bigl|{\rm e}^{v}-v-1\bigr|\leqslant\frac{v^2}{2}{\rm e}^{v},&{}\quad v\geqslant0.
\end{align*}

Using them, we get\vspace*{-2pt}
\begin{align}
&\bigl|\mathbb{E}\bigl({\rm e}^{y\eta_{i}}-1-y\eta_{i}\bigr)\bigr|\nonumber\\[1pt]
&\quad{}\leqslant
2y\mathbb{E}\bigl(|\eta_{i}|\ind_{\lbrace\eta_{i}\leqslant-u\rbrace}\bigr)
+
\frac{y^{2}}{2} \mathbb{E}\bigl(\eta_{i}^2\ind_{\lbrace-u<\eta_{i}\leqslant0\rbrace}\bigr)
+
\frac{y^{2}}{2}\mathbb{E}\bigl(\eta_{i}^{2}{\rm e}^{y\eta_{i}}\ind_{\lbrace\eta_{i}>0\rbrace}\bigr)\nonumber\\
&\quad{}\leqslant
2y\sup_{i\in\mathbb{N}}\mathbb{E}\bigl(|\eta_{i}|\ind_{\lbrace\eta_{i}\leqslant-u\rbrace}\bigr)
+
\frac{y^{2}u^{2}}{2}
+
\frac{y^{2}}{2}\sup_{i\in\mathbb{N}}\mathbb {E}\bigl(\eta_{i}^{2}{\rm e}^{y\eta_{i}}\ind_{\lbrace\eta_{i}>0\rbrace}\bigr),
\label{r3}
\end{align}
where $i\in\mathbb{N}$, $ 0<y\leqslant\delta$, and $ u>0$.

Since\vspace*{-2pt}
\begin{align*}
\lim_{v\to\infty}\frac{{\rm e}^{\delta v/2}}{v^2}=\infty,
\end{align*}
we have\vspace*{-2pt}
\begin{align*}
{\rm e}^{{\delta}v/{2}}\geqslant v^2
\end{align*}
for all $v\geqslant c_5$, where $c_5=c_5(\delta)>0$.

Therefore,\vspace*{-2pt}
\begin{align}
&
\sup_{i\in\mathbb{N}}\mathbb{E}\bigl(\eta_{i}^{2}{\rm e}^{{\delta}\eta_{i}/2}\ind_{\lbrace\eta_{i}>0\rbrace}\bigr)
\nonumber\\
&\quad{}\leqslant
\sup_{i\in\mathbb{N}}\mathbb{E}\bigl(\eta_{i}^{2}{\rm e}^{{\delta}\eta_{i}/2}\ind _{\lbrace0<\eta_{i}\leqslant c_5\rbrace}\bigr)+\sup_{i\in\mathbb{N}}
\mathbb {E}\bigl(\eta_{i}^{2}{\rm e}^{\delta\eta_{i}/2}\ind_{\lbrace\eta_{i}>c_5\rbrace}\bigr)\nonumber\\
&\quad{}\leqslant
\bigl(c_5^{2}+1\bigr)\sup_{i\in\mathbb{N}}\mathbb{E} {\rm e}^{\delta\eta_{i}}<\infty.
\label{r:septinta}
\end{align}

Choosing $u=\frac{1}{\sqrt[4]{y}}$ in \eqref{r3} and using \eqref{r:septinta},
we get\vspace*{-2pt}
\begin{align}
&\bigl|\mathbb{E}\bigl({\rm e}^{y\eta_{i}}-1-y\eta_{i}\bigr)\bigr|\nonumber\\
&\quad{}\leqslant
2y\sup_{i\in\mathbb{N}}\mathbb{E}\Bigl(|\eta_{i}|
\ind_{\lbrace\eta_{i}\leqslant-\frac{1}{\sqrt[4]{y}}\rbrace}\Bigr)
+
\frac{y^{\frac{3}{2}}}{2}
+
\frac{y^{2}}{2}\sup_{i\in\mathbb{N}}
\mathbb{E}\bigl(\eta _{i}^{2}{\rm e}^{y\eta_{i}}\ind_{\lbrace\eta_{i}>0\rbrace}\bigr)\nonumber\\
&\quad{}\leqslant
y \biggl(2\sup_{i\in\mathbb{N}}\mathbb{E}
\bigl(|\eta_{i}|\ind _{\lbrace\eta_{i}\leqslant-\frac{1}{\sqrt[4]{y}}\rbrace}\bigr)
+
\frac{y^{\frac{1}{2}}}{2}
+
\frac{y}{2}\bigl(c_5^{2}+1\bigr)\sup_{i\in\mathbb{N}}\mathbb{E} {\rm e}^{\delta\eta_{i}} \biggr)\nonumber\\
&\quad{}=:
y\alpha(y),
\label{r:astunta}
\end{align}
where $i \in\mathbb{N}$, $y\in(0,\delta/2]$, $ c_5=c_5(\delta)$, and
\[
\alpha(y)=2\sup_{i\in\mathbb{N}}\mathbb{E}\Bigl(|
\eta_{i}|\ind _{\lbrace\eta_{i}\leqslant-\frac{1}{\sqrt[4]{y}}\rbrace}\Bigr)+\frac{y^{\frac
{1}{2}}}{2}+\frac{y}{2}
\bigl(c_5^{2}+1\bigr)\sup_{i\in\mathbb{N}}
\mathbb{E} {\rm e}^{\delta\eta_{i}}.
\]

Conditions ($\mathcal{C}1^*$) and ($\mathcal{C}2^*$) imply that $\alpha
(y)\downarrow0$ as $ y\rightarrow0$.

For an arbitrary positive $v$, we have
\begin{align*}
\sup
_{i\in\mathbb{N}}\mathbb{E} \bigr(|\eta_i|\ind_{\{\eta_i<0\}
} \bigr)&{}=
\sup
_{i\in\mathbb{N}}\mathbb{E} \big(|\eta_i|\ind_{\{
-v<\eta_i<0\}}+|
\eta_i|\ind_{\{\eta_i\leqslant-v\}} \big)
\\[1pt]
&{}\leqslant v+\sup
_{i\in\mathbb{N}}\mathbb{E} \big(|\eta_i|\ind
_{\{\eta_i\leqslant-v\}} \big).
\end{align*}

So, condition ($\mathcal{C}2^*$) implies that
\begin{equation}
\label{pap} \sup
_{i\in\mathbb{N}}\mathbb{E} \big(|\eta_i|
\ind_{\{\eta_i<0\}
} \big)<\infty.
\end{equation}

Denote
\[
\widehat{y}=\min \Bigl\{\delta/2, 1/ \Bigl(2\sup
_{i\in\mathbb
{N}}\mathbb{E} \big(|
\eta_i|\ind_{\{\eta_i<0\}} \big) \Bigr) \Bigr\}.
\]

If $y\in(0,\widehat{y}\,]$, then
\begin{align*}
y\bigl(\mathbb{E}\eta_i+\alpha(y)\bigr)
&{}>
{y}\mathbb{E}\eta_i
\\[2pt]
&{}=
y \mathbb{E} \bigl(\eta_i\ind_{\{\eta_i\geqslant0\}}+\eta_i\ind_{\{\eta_i<0\}} \bigr)
\\[2pt]
&{}\geqslant
y \mathbb{E}\bigl(\eta_i\ind_{\{\eta_i<0\}}\bigr)
\\[2pt]
&{}\geqslant
\widehat{y}\inf_{i\in\mathbb{N}}\mathbb{E} \bigl(\eta_i\ind _{\{\eta_i<0\}}\bigr)
\\[2pt]
&{}= - \widehat{y}\sup_{i\in\mathbb{N}}\mathbb{E} \big(|\eta_i|\ind_{\{\eta_i<0\}} \big)
\\[2pt]
&{}\geqslant -1/2
\end{align*}
for all $i\in\mathbb{N}$.

Therefore, \eqref{r1}, \eqref{r2}, \eqref{r:astunta}, and the
well-known inequality
\[
\ln(1+u)\leqslant u,\quad u>-1,
\]
imply that\vadjust{\eject}
\begin{align}
\mathbb{P} \Biggl(\sum
_{i=1}^k
\eta_{i}>x \Biggr) &{}\leqslant{\rm e}^{-yx}\prod
_{i=1}^{k}\bigl(1+y\mathbb{E}\eta_{i}+
\mathbb {E}\bigl({\rm e}^{y\eta_{i}}-1-y\eta_{i}\bigr)\bigr)\notag
\\[-2.5pt]
&{}\leqslant{\rm e}^{-yx}\prod_{i=1}^{k}
\bigl(1+y\bigl(\mathbb{E}\eta_{i}+\alpha(y)\bigr)\bigr) \notag
\\[-2.5pt]
&{}=\exp \Biggl\lbrace-yx+\sum_{i=1}^{k}
\ln\bigl(1+y\bigl(\mathbb{E}\eta_{i}+\alpha (y)\bigr)\xch{\bigr)\Biggr\rbrace}{\Biggr\rbrace} \notag
\\[-2.5pt]
&{}\leqslant \exp \Biggl\lbrace-yx+y\sum_{i=1}^{k}
\mathbb{E}\eta _{i}+yk\alpha(y) \Biggr\rbrace,\label{r:devinta}
\end{align}
where $k\in\mathbb{N}$, $ x\geqslant0$, and
$y\in(0,\widehat{y}\,]$.

By estimate \eqref{pap} and condition ($\mathcal{C}3^*$) we can suppose that
\begin{align*}
\notag\limsup_{n\to\infty}\frac{1}{n} \sum
_{i=1}^{n}
\mathbb {E}\eta_{i} = -c_{6}
\end{align*}
for some positive constant $c_6$. Then we have
\begin{align}
\notag\frac{1}{k}\sum_{i=1}^{k}
\mathbb{E}\eta_{i}\leqslant-\frac{c_{6}}{2}
\end{align}
for $k\geqslant M+1$ with some $M\geqslant1$.
Moreover, there exists $ y^{*}\in(0,\widehat{y}\,]$ such that $ \alpha
(y^{*})\leqslant{c_{6}}/{4}$ since $\alpha(y)\downarrow0$ as $
y\rightarrow0$.

Using results from \eqref{r0}, \eqref{r1}, and \eqref{r:devinta}, we derive
\begin{align*}
&\mathbb{P} \Biggl(\sup_{k\geqslant1}\sum
_{i=1}^k \eta _{i}>x \Biggr)
\\[-2.5pt]
&\quad{}\leqslant\sum_{k=1}^M \mathbb{P}
\Biggl(\sum_{i=1}^k \eta_{i}>x
\Biggr)+\sum_{k=M+1}^\infty\mathbb{P} \Biggl(\sum
_{i=1}^k\eta_{i}>x \Biggr)\notag
\\[-2.5pt]
&\quad{}\leqslant\sum_{k=1}^M {\rm
e}^{-y^{*}x}\prod_{i=1}^{k}\mathbb {E}
{\rm e}^{y^{*}\eta_{i}}+\sum_{k=M+1}^\infty
\mathbb{P} \Biggl(\sum_{i=1}^k
\eta_{i}>x \Biggr)\notag
\\[-2.5pt]
&\quad{}\leqslant\sum_{k=1}^M {\rm
e}^{-y^{*}x}\prod_{i=1}^{k}\mathbb {E}
{\rm e}^{y^{*}\eta_{i}}+\sum_{k=M+1}^\infty{\rm
e}^{-y^{*}x+y^{*}\sum_{i=1}^{k}\mathbb{E}\eta_{i}+y^{*}k\alpha
(y^{*})}\notag\\
&\quad{}\leqslant {\rm e}^{-y^{*}x} \Biggl(\sum_{k=1}^M
\prod_{i=1}^{k}\mathbb {E} {\rm
e}^{y^{*}\eta_{i}}+\sum_{k=0}^\infty{\rm
e}^{-ky^*{c_{6}}/{4}} \Biggr)\notag
\\[-2.5pt]
&\quad{}\leqslant {\rm e}^{-y^{*}x} \Biggl(\sum_{k=1}^M
\prod_{i=1}^{k}\varDelta +\frac{1}{1-{\rm e}^{-y^{*}c_6/4}}
\Biggr)\notag
\\[-2.5pt]
&\quad{}={\rm e}^{-y^{*}x} \biggl(\frac{\varDelta(\varDelta^M-1)}{\varDelta-1}+\frac
{{\rm e}^{y^{*}c_6/4}}{{\rm e}^{{y^{*}c_{6}/4}}-1}
\biggr)=:c_{3}{\rm e}^{-c_{4}x},\notag
\end{align*}
where
\begin{align*}
&x\geqslant0,\\
&\varDelta=1+\sup_{i\in\mathbb{N}}\mathbb{E} {\rm e}^{\delta\eta_{i}},\\
&c_{3}=\frac{\varDelta(\varDelta^M-1)}{\varDelta-1}+\frac{e^{y^{*}{c_{6}}/{4}}}{e^{y^{*}{c_{6}}/{4}}-1},\\
&c_{4}=y^{*}\in(0,\widehat{y}\,]
\end{align*}
with $M\geqslant1$, $c_6>0$, and $\widehat{y}>0$ defined previously.
The lemma is now proved.
\end{proof}

\section{Proof of Theorem \ref{t}}\label{pa}

In this section, we prove Theorem \ref{t}.

\begin{proof}
Since
\[
\psi(x)= \mathbb{P} \Biggl(\sup_{n\geqslant1} \Biggl
\lbrace \sum_{i=1}^n (Z_i-c
\theta_{i}) \Biggr\rbrace>x \Biggr),
\]
the desired bound of Theorem \ref{t} can be derived from auxiliary
Lemma \ref{lll}.

Namely, supposing that r.v.s $Z_i-c\theta_i$, $i\in\{1,2,\ldots\}$,
satisfy all conditions of Lemma \ref{lll}, we get
\[
\psi(x)\leqslant c_{7}{\rm e}^{-c_{8}x}
\]
for all $x\geqslant0$ with some positive $c_7$, $c_8$ independent of $x$.

Therefore,
\[
\psi(x)\leqslant c_{7}{\rm e}^{-c_8 x/2}{\rm e}^{-c_8 x/2}
\leqslant{\rm e}^{-c_8 x/2},
\]
with $x\geqslant\max\{0,(2\ln c_7)/c_8\}$,

Thus, it suffices to check all three assumptions in our lemma with
random variables $Z_i-c\theta_i,\, i\in\mathbb{N}$.
The requirement ($\mathcal{C}3^*$) of Lemma \ref{lll} is evidently
satisfied by condition ($\mathcal{C}$3).

Next, it follows from ($\mathcal{C}$1) that
\begin{align*}
\sup_{i\in\mathbb{N}}\mathbb{E} {\rm e}^{\gamma
(Z_i-c\theta_i)}
\leqslant\sup_{i\in\mathbb{N}}\mathbb {E} {\rm
e}^{\gamma Z_{i}}<\infty. \notag
\end{align*}

So, the requirement ($\mathcal{C}1^*$) holds too.

It remains to prove that
\begin{align}
\label{amen} \lim_{u\to\infty}\sup_{i\in\mathbb{N}}
\mathbb{E} \bigl(|Z_i-c\theta_i|\ind_{\lbrace Z_i-c\theta_i\leqslant-u\rbrace} \bigr)=0.
\end{align}

To establish this, we use the inequality
\begin{align}
\sup_{i\in\mathbb{N}}\mathbb{E} \big(|Z_i-c \theta_i|\ind_{\lbrace
Z_i-c\theta_i \leqslant-u\rbrace} \big)
&{}\leqslant\sup_{i\in\mathbb{N}}\mathbb{E} \big(Z_i\ind_{\lbrace Z_i-c\theta_i\leqslant-u\rbrace}\big)
\nonumber\\
&\quad{}+c\sup_{i\in\mathbb{N}}\mathbb{E} \big(\theta_i\ind
_{\lbrace Z_i-c\theta_i\leqslant-u\rbrace}\big).\label{r:desimta}
\end{align}

Taking the limit as $u\rightarrow\infty$ in the first summand of the
right side of inequality \eqref{r:desimta}, we get
\begin{align}
\label{r:vienuolikta}
&\lim_{u\to\infty}\sup_{i\in\mathbb{N}}\mathbb{E} \big(Z_i\ind_{\lbrace Z_i-c\theta_i\leqslant-u\rbrace}\big)
\nonumber\\
&\quad{}\leqslant \lim_{u\to\infty}\sup_{i\in\mathbb{N}}\mathbb{E}
\big(Z_i\ind_{\lbrace Z_i-c\theta_i\leqslant-u\rbrace}\ind_{\lbrace\theta
_i\leqslant\frac{u}{2c}\rbrace}\big)
\notag\\&\qquad{}
+\lim_{u\to\infty}\sup_{i\in\mathbb{N}}\mathbb{E}
\big(Z_i\ind _{\lbrace Z_i-c\theta_i\leqslant-u\rbrace}\ind_{\lbrace\theta_i> \frac
{u}{2c}\rbrace}\big)\notag
\\
&\quad{}\leqslant \lim_{u\to\infty}\sup_{i\in\mathbb{N}}\mathbb{E}
\big(Z_i\ind_{\lbrace Z_i\leqslant-u/2\rbrace}\big)
\notag\\&\qquad{}
+\lim_{u\to\infty}\sup_{i\in\mathbb{N}}\mathbb{E}
\big(Z_i\ind _{\lbrace Z_i-c\theta_i\leqslant-u\rbrace}\ind_{\lbrace\theta_i>\frac
{u}{2c}\rbrace}\big)\notag
\\
&\quad{}=\lim_{u\to\infty}\sup_{i\in\mathbb{N}}\mathbb{E}
\big(Z_i\ind _{\lbrace Z_i-c\theta_i\leqslant-u\rbrace}\ind_{\lbrace\theta_i> \frac
{u}{2c}\rbrace}\big)\notag
\\
&\quad{}\leqslant\lim_{u\to\infty}\sup_{i\in\mathbb{N}}\mathbb{E}
\big(Z_i\ind_{\lbrace\theta_i> \frac{u}{2c}\rbrace} \big)\notag
\\
&\quad{}= \lim_{u\to\infty}\sup_{i\in\mathbb{N}}
\mathbb{E}Z_i\mathbb {P} \Bigl(\theta_i>
\frac{u}{2c} \Bigr)\notag
\\
&\quad{}\leqslant\sup_{i\in\mathbb{N}}\mathbb{E}Z_i\lim
_{u\to\infty}\sup_{i\in\mathbb{N}}\mathbb{P} \Bigl(
\theta_i> \frac{u}{2c} \Bigr).
\end{align}

Since
$
x\leqslant{{\rm e}^{\gamma x}}/{\gamma}$, $x\geqslant0$, condition
($\mathcal{C}$1) implies that
\begin{align}
\label{r:dvylikta} \sup_{i\in\mathbb{N}}\mathbb{E}Z_i<\infty.
\end{align}

In addition,
\begin{align}
\label{r:trylikta} \lim_{u\to\infty}\sup_{i\in\mathbb{N}}\mathbb{P}
\Bigl(\theta_i> \frac
{u}{2c} \Bigr)&=\lim
_{u\to\infty}\sup_{i\in\mathbb{N}}\mathbb{E} \biggl(
\frac{\theta_i\ind_{\lbrace\theta_i> \frac{u}{2c}\rbrace}}{\theta
_i} \biggr)\notag
\\
&\leqslant\lim_{u\to\infty}\frac{2c}{u}\sup
_{i\in\mathbb{N}}\mathbb {E} \big(\theta_i\ind_{\lbrace\theta_i> \frac{u}{2c}\rbrace}
\big){=}0
\end{align}
by condition ($\mathcal{C}$2).

Therefore, relations \eqref{r:vienuolikta}, \eqref{r:dvylikta}, and
\eqref{r:trylikta} imply that
\begin{align}
\label{r:keturiolikta} \lim_{u\to\infty}\sup_{i\in\mathbb{N}}
\mathbb{E}\big(Z_i\ind _{\lbrace Z_i-c\theta_i\leqslant-u\rbrace}\big)=0.
\end{align}

Now taking the limit as $u\rightarrow\infty$ in the second summand of
the right side of inequality \eqref{r:desimta}, by condition ($\mathcal
{C}$2) we have
\begin{align}
\label{r:penkiolikta} \lim_{u\to\infty}\sup_{i\in\mathbb{N}}\mathbb{E}
\big(\theta_i\ind _{\lbrace Z_i-c\theta_i\leqslant-u\rbrace} \big)&=\lim_{u\to\infty}\sup
_{i\in\mathbb{N}}\mathbb{E} \big(\theta_i\ind_{\lbrace\theta_i\geqslant
\frac{1}{c}(Z_i+u)\rbrace} \big)
\notag
\\
&\leqslant\lim_{u\to\infty}\sup_{i\in\mathbb{N}}\mathbb{E} \big(
\theta _i\ind_{\lbrace\theta_i\geqslant\frac{u}{c}\rbrace} \big){=}0.
\end{align}
We now see that the desired equality \eqref{amen} follows from \eqref
{r:desimta}, \eqref{r:keturiolikta}, and \eqref{r:penkiolikta}.
This means that all requirements of Lemma \ref{lll} hold for r.v.s
$Z_i-c\theta_i$, $i\in\mathbb{N}$.
\end{proof}

\section*{Acknowledgments}

We would like to thank the anonymous referee for extremely detailed and
helpful comments.

\addcontentsline{toc}{section}{References}


\end{document}